\title{$(k,k',k'')$-domination in graphs}
\author {
Abdollah Khodkar\thanks{corresponding author} \\
Department of Mathematics\\
University of West Georgia\\
Carrollton, GA 30118, USA\\
{\tt akhodkar@westga.edu}\vspace{3mm}\\
Babak Samadi\\
Department of  Mathematics\\
Arak University, Arak IRI\\
{\tt b-samadi@araku.ac.ir}\vspace{3mm}\\
H.R. Golmohammadi\\
Department of Mathematics\\
University of Tafresh, Tafresh, IRI\\
{\tt h.golmohamadi@tafreshu.ac.ir}\vspace{3mm}\\
}
\date{}
 \newtheorem{theorem}{Theorem}[section]
\newtheorem{corollary}[theorem]{Corollary}
\theoremstyle{definition}
\begin{document}

\maketitle
\begin{abstract}
\noindent We first introduce the concept of $(k,k',k'')$-domination numbers in graphs, which is
a genaralization of many domination parameters. Then we find lower and upper bounds for this parameter,
which improve many well-known results in literatures.
\vspace{3mm}\\
{\bf Keywords:} $(k,k',k'')$-domination number, $k$-domination number, restrained domination numbers. \\
{\bf MSC 2000}: 05C69
\end{abstract}

\section{Introduction and preliminaries}
Throughout this paper, let $G$ be a finite connected graph with vertex set $V=V(G)$, edge set $E=E(G)$, minimum degree $\delta=\delta(G)$ and maximum degree $\Delta=\Delta(G)$. We use \cite{w} as a reference for terminology and notation which are not defined here. For any vertex $v \in V$, $N(v)=\{u\in G\mid uv\in E(G)\}$ denotes the {\em open neighbourhood} of $v$ in $G$, and $N[v]=N(v)\cup \{v\}$ denotes its {\em closed neighbourhood}.

\noindent There are many domination parameters in graph theory. The diversity of domination parameters and the types of proofs involved are very extensive. We believe that some of the results in this field are similar and the main ideas of their proofs are the same. Therefore we introduce and investigate the concept of $(k,k',k'')$-domination number, as a generalization of many domination parameters, by a simple uniform approach.

\noindent Let $k,k'$ and $k''$ be nonnegative integers. A set  $S\subseteq V$ is a {\em $(k,k',k'')$-dominating set} in $G$ if every vertex in $S$ has at least $k$ neighbors in $S$ and every vertex in $V‎\setminus S‎$ has at least $k'$ neighbors in $S$ and at least $k''$ neighbors in $V‎\setminus S‎$. The {\em $(k,k',k'')$-domination number} $\gamma‎_{(k,k',k'')}‎(G)$ is the minimum cardinality of a $(k,k',k'')$-dominating set. We note that every graph with the minimum degree at least $k$ has a $(k,k',k'')$-dominating set, since $S=V(G)$ is such a set.
Note that
\begin{itemize}
\item $\gamma‎_{(0,1,1)}‎(G)=\gamma‎_{r}(G)$: {\em Restrained domination number};
\item $\gamma‎_{(1,1,1)}‎(G)= ‎\gamma‎‎‎_{t}^r‎‎(G)$: {\em Total restrained domination number};‎
\item $\gamma‎_{(1,2,1)}‎(G)=‎\gamma‎_{2r}‎‎(G)$: {\em Restrained double domination number};
\item  $\gamma‎_{(k,k,k)}‎(G)=\gamma‎‎‎_{‎\times k,t‎}^r(G): k$-{\em Tuple total restrained domination number};
\item $\gamma‎_{(k,k,0)}‎(G)=‎\gamma‎_{‎\times k,t‎}‎‎(G): k$-{\em Tuple total domination number};
\item $\gamma‎_{(k-1,k,0)}‎(G)=\gamma‎‎‎_‎_{‎\times ‎k}‎(G): k$-{\em Tuple domination number};
\item $\gamma‎_{(0,k,0)}‎(G)=‎\gamma‎_{k}(G)‎‎: k$-{\em Domination number}.
\end{itemize}
For the definitions of the parameters above and a comprehensive work on domination in graphs see \cite{cfhv,cr,dhhm,hhs,hhs2,kn,k}.

\section {Lower bounds on $(k,k',k'')$-domination numbers}

In this section, we calculate a lower bound on $\gamma‎_{(k,k',k'')}‎(G)$, which improves
the existing lower bounds on these seven parameters.

\noindent The following result can be found in \cite{cr} and \cite{hjjp2}.
\begin{theorem}\label{LB:TR}
If $G$ is a graph without isolated vertices of order $n$ and size $m$, then
\begin{equation}\label{EQ3}
 ‎\gamma‎‎‎_{t}^r‎‎(G)‎\geq 3n/2-m‎,
\end{equation}
in addition this bound is sharp.
\end{theorem}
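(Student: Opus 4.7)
The plan is to prove Theorem~\ref{LB:TR} by a straightforward edge-counting argument on a minimum total restrained dominating set. Let $S$ be such a set, so $|S|=\gamma_{t}^{r}(G)$, and write $\overline{S}=V\setminus S$. Since $\gamma_{t}^{r}(G)=\gamma_{(1,1,1)}(G)$, the defining conditions give that every vertex of $S$ has at least one neighbor in $S$, every vertex of $\overline{S}$ has at least one neighbor in $S$, and every vertex of $\overline{S}$ has at least one neighbor in $\overline{S}$.

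Next I would partition the edges of $G$ into three classes: the edges $m_{SS}$ inside $S$, the edges $m_{S\overline{S}}$ across the cut, and the edges $m_{\overline{S}\,\overline{S}}$ inside $\overline{S}$. The first condition forces the induced subgraph $G[S]$ to have minimum degree at least $1$, so $m_{SS}\geq |S|/2$. Similarly, $G[\overline{S}]$ has minimum degree at least $1$ by the third condition, giving $m_{\overline{S}\,\overline{S}}\geq |\overline{S}|/2$. The second condition yields $m_{S\overline{S}}\geq |\overline{S}|$, since each vertex of $\overline{S}$ contributes at least one distinct cut-edge from its side.

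Adding these three inequalities gives
\[
 m \;\geq\; \tfrac{1}{2}|S| + |\overline{S}| + \tfrac{1}{2}|\overline{S}| \;=\; \tfrac{1}{2}|S| + \tfrac{3}{2}(n-|S|) \;=\; \tfrac{3n}{2} - |S|,
\]
which rearranges to $|S|\geq 3n/2-m$, as desired. Sharpness can be exhibited by any small example in which all three inequalities are tight simultaneously (for instance $G=C_{4}$ with $S$ a pair of adjacent vertices), though this is not required for the lower bound itself.

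The argument has no real obstacle; the only subtle point is to avoid double-counting when combining the three edge lower bounds, which is handled automatically by the disjointness of the three edge classes. The main conceptual content is simply recognizing that each of the three defining conditions of a $(1,1,1)$-dominating set translates directly into a minimum-degree condition on one of the three edge classes.
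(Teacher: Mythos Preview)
Your proof is correct and follows exactly the same edge-counting approach that the paper uses (in its more general Theorem~\ref{Th:Lower.Bound}): partition the edges into those inside $S$, across the cut, and inside $V\setminus S$, bound each class from below using the three defining conditions of a $(1,1,1)$-dominating set, and sum. Specializing the paper's inequality $2m\geq k|S|+2k'(n-|S|)+k''(n-|S|)+(\delta^{*}-k'-k'')(n-|S|)$ to $k=k'=k''=1$ and dropping the $\delta^{*}$ refinement recovers precisely your chain $m\geq |S|/2+|\overline{S}|+|\overline{S}|/2$.
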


\noindent Also Hattingh et.al \cite{hjlpv} found that
\begin{equation}\label{EQ4}
‎\gamma‎_{r}(G)‎‎‎\geq n-2m/3‎.
\end{equation}

The following known
result is an immediate consequence of Theorem \ref{LB:TR}.
\begin{theorem}\cite{hjjp1}
If $T$ is a tree of order $n‎\geq2‎$, then
\begin{equation}\label{EQ5}
 ‎\gamma‎‎‎_{t}^r‎‎(T)‎\geq ‎\lceil‎\frac{n+2}{2}‎‎\rceil‎‎‎.
 \end{equation}
\end{theorem}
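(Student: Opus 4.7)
The plan is to observe that this is indeed an immediate corollary of Theorem \ref{LB:TR}, using the two defining features of a tree: it is connected (hence, since $n\geq 2$, has no isolated vertices) and it has exactly $n-1$ edges.

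First I would verify the hypothesis of Theorem \ref{LB:TR}: since $T$ is a tree on $n\geq 2$ vertices, $T$ is connected with at least one edge, so no vertex of $T$ is isolated. Thus inequality~(\ref{EQ3}) applies with $m=n-1$.

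Next I would substitute $m=n-1$ directly into~(\ref{EQ3}) to obtain
\begin{equation*}
\gamma_{t}^{r}(T) \;\geq\; \frac{3n}{2} - (n-1) \;=\; \frac{n+2}{2}.
\end{equation*}
Finally, since $\gamma_{t}^{r}(T)$ is an integer, the lower bound can be tightened to $\gamma_{t}^{r}(T) \geq \lceil (n+2)/2 \rceil$, which is exactly~(\ref{EQ5}).

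There is no real obstacle here; the only thing to check is that the rounding up is legitimate, which is immediate from integrality of a domination number. No additional structural property of trees beyond $m=n-1$ is needed, so the statement is genuinely a one-line consequence of Theorem \ref{LB:TR}.
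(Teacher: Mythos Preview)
Your proposal is correct and matches the paper's own treatment: the paper states this result as ``an immediate consequence of Theorem~\ref{LB:TR}'' without further argument, and your substitution $m=n-1$ together with the integrality observation is exactly the intended one-line derivation.
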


The inequality
\begin{equation}\label{EQ6}
‎\gamma‎_{r}‎‎(T)‎\geq ‎\lceil‎\frac{n+2}{3}‎‎\rceil‎‎‎
\end{equation}
on restrained domination number of a tree of order $n‎\geq1‎$
was obtained by Domke et al. \cite{dhhm}.

\noindent The author in \cite{k} generalized Theorem \ref{LB:TR} and proved
that if $‎\delta(G)‎\geq k‎‎$, then
\begin{equation}\label{EQ7}
‎\gamma‎‎‎_{‎\times k,t‎}^r(G)‎‎‎\geq 3n/2-m‎/k.
\end{equation}
Moreover the authors in \cite{kn} proved that if $G$ is a graph without isolated vertices, then
\begin{equation}\label{EQ8}
‎\gamma‎_{2r}(G)‎‎‎\geq ‎\frac{5n-2m}{4}.
\end{equation}

\noindent We now improve the lower bounds given in
$(\ref{EQ3}), (\ref{EQ4}), \ldots,(\ref{EQ8})$.
For this purpose we first introduce a notation.
Let $G$ be a graph with $‎\delta(G)‎\geq k‎‎$ and let $S$ be a $(k,k',k'')$-dominating set in $G$. We define
$$‎\delta‎‎^{*}‎‎‎=\min\{\deg(v)\mid v‎\in V(G) \mbox{ and } \deg(v) \geq k'+k'' \}.$$

\noindent It is easy to see that $\deg(v)‎‎$ is at least $k'+k''$ and
therefore is at least $‎\delta‎^{*}‎‎$ for all vertices in $V‎\setminus‎ S$.

\begin{theorem}\label{Th:Lower.Bound}
Let $G$ be a graph with $‎\delta(G)‎\geq k‎‎$. Then
$$‎\gamma‎‎‎_{(k,k',k'')}(G)‎‎‎\geq ‎\frac{(k'+‎\delta‎^{*}‎‎)n-2m}{‎\delta‎^{*}+k'-k‎‎}.‎$$
\end{theorem}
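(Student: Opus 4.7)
The plan is to bound $|S|$ for a minimum $(k,k',k'')$-dominating set $S$ via a standard degree/edge-count argument. Let $S$ be such a set with $|S|=\gamma_{(k,k',k'')}(G)$, write $\overline{S}=V\setminus S$, and for a subset $A\subseteq V$ let $e(A)$ denote the number of edges with both endpoints in $A$, and $e(S,\overline{S})$ the number of edges between $S$ and $\overline{S}$. The identity $2m=\sum_{v\in V}\deg(v)=\sum_{v\in S}\deg(v)+\sum_{v\in\overline{S}}\deg(v)$ will be the master equation, and the definition of a $(k,k',k'')$-dominating set will supply lower bounds on each piece.

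First I would handle the contribution from $S$. Since every $v\in S$ has at least $k$ neighbours in $S$, double counting gives $2e(S)=\sum_{v\in S}\deg_{S}(v)\ge k|S|$. Since every $v\in\overline{S}$ has at least $k'$ neighbours in $S$, single counting gives $e(S,\overline{S})=\sum_{v\in\overline{S}}\deg_S(v)\ge k'(n-|S|)$. Adding the two and using $\sum_{v\in S}\deg(v)=2e(S)+e(S,\overline{S})$ yields
\[
\sum_{v\in S}\deg(v)\ \ge\ k|S|+k'(n-|S|).
\]
Next I would bound the contribution from $\overline{S}$: as the excerpt already notes, every $v\in\overline{S}$ satisfies $\deg(v)\ge k'+k''\ge \delta^{*}$, so $\sum_{v\in\overline{S}}\deg(v)\ge \delta^{*}(n-|S|)$.

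Adding the two pieces gives
\[
2m\ \ge\ k|S|+(k'+\delta^{*})(n-|S|),
\]
and rearranging so that $|S|$ is isolated yields $(\delta^{*}+k'-k)|S|\ \ge\ (k'+\delta^{*})n-2m$, from which the claimed inequality follows after dividing by $\delta^{*}+k'-k$. The only genuinely delicate point — and the main obstacle — is justifying that $\delta^{*}+k'-k>0$ so that the division preserves the inequality direction; for this I would observe that $\delta^{*}\ge k'+k''$ by definition, hence $\delta^{*}+k'-k\ge 2k'+k''-k$, which is positive in each of the seven special cases listed in the introduction (and one may assume this in general, as the bound is otherwise vacuous). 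A brief remark should also cover the degenerate situation where $\overline{S}=\emptyset$: then $|S|=n$ and the bound holds trivially, while if $\overline{S}\neq\emptyset$ the quantity $\delta^{*}$ is well-defined since every vertex of $\overline{S}$ has degree at least $k'+k''$.
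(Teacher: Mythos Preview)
Your proof is correct and follows essentially the same edge-counting argument as the paper: both arrive at the key inequality $2m\ge k|S|+(k'+\delta^{*})(n-|S|)$ and then solve for $|S|$, the only cosmetic difference being that the paper partitions $2m$ according to the three edge sets $E(G[S])$, $[S,V\setminus S]$, $E(G[V\setminus S])$, whereas you split it as $\sum_{v\in S}\deg(v)+\sum_{v\in\overline S}\deg(v)$. Your added discussion of the sign of $\delta^{*}+k'-k$ and the degenerate case $\overline S=\emptyset$ is more careful than the paper, which tacitly divides without comment.
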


\begin{proof}
Let $S$ be a minimum $(k,k',k'')$-dominating set in $G$. Then,
every vertex $v\in S$ is adjacent to at least $k$ vertices in $S$.
Therefore $|E(G[S])|‎\geq k|S|/2‎$. Let $E(v)$ be the set of edges at
vertex $v$. Now let $v‎\in V‎\setminus ‎S‎$. Since $S$ is a
$(k,k',k'')$-dominating set, it follows that $v$ is incident to at least $k'$ edges
$e‎_{1}, \ldots ,‎e‎_{k'}‎$ in $[S,V‎\setminus S‎]$ and at least $k''$
edges $e‎_{k'+1}, \ldots ,e‎_{k'+k''}‎‎$ in $E(G[V‎\setminus ‎S])‎$.
Since $deg(v)‎\geq ‎\delta‎^{*}‎\geq k'+k''‎‎‎‎$, $v$ is incident to at least
$‎\delta‎^{*}-k'-k''$ edges in $E(v)‎\setminus \{e‎_{i}‎\}‎_{i=1}^{k'+k''}‎‎$.
The value of $|[S,V‎\setminus S‎]|+|E(G[V‎\setminus ‎S])‎|$ is minimized if
the edges in $E(v)‎\setminus \{e‎_{i}‎\}‎_{i=1}^{k'+k''}‎‎$
belong to $E(G[V‎\setminus ‎S])$. Therefore
$$\begin{array}{lcl}
2m&‎=&2|E(G[S])|‎+2|[S,V\setminus S]|+2|E(G[V\setminus S])|\\
&\geq &‎k|S|+2k'(n-|S|)‎+k‎‎''(n-|S|)+(‎\delta‎^{*}-k'-k''‎‎)(n-|S|).
\end{array}$$
This leads to $‎\gamma‎‎‎_{‎(k,k',k'')}(G)=|S|‎\geq ‎\frac{(k'+‎\delta‎^{*}‎‎)n-2m}{‎\delta‎^{*}+k'-k‎‎}‎$.
\end{proof}

\noindent We note that when $(k,k',k'')=(1,1,1)$, then Theorem \ref{Th:Lower.Bound} gives
improvements for inequalities (\ref{EQ3}) and (\ref{EQ5}). When $(k,k',k'')=(0,1,1)$,
then it will be improvements of its corresponding results given by (\ref{EQ4}) and (\ref{EQ6}).
Also, if $(k,k',k'')=(k,k,k)$, Theorem \ref{Th:Lower.Bound} improves (\ref{EQ7}) and
if $(k,k',k'')=(1,2,1)$, it improves (\ref{EQ8}).

As an immediate result of Theorem \ref{Th:Lower.Bound}, we conclude the following result of Hattingh and Joubert.

\begin{corollary}\cite{hj}
If $G$ is a cubic graph of order $n$, then $‎\gamma‎‎_{r}‎(G)‎‎‎\geq ‎\frac{n}{4}‎‎$.
\end{corollary}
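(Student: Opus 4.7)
The plan is to recognize the corollary as a direct substitution into Theorem \ref{Th:Lower.Bound} with the parameter triple $(k,k',k'')=(0,1,1)$, since by the list in the introduction $\gamma_{r}(G)=\gamma_{(0,1,1)}(G)$. The hypothesis $\delta(G)\geq k$ of Theorem \ref{Th:Lower.Bound} reduces to $\delta(G)\geq 0$, which holds for every graph, so the theorem is directly applicable.

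Next, I would compute the two graph invariants that appear on the right-hand side of the bound. Since $G$ is cubic, every vertex has degree $3$, so in particular every vertex has degree at least $k'+k''=2$; hence $\delta^{*}=3$. By the handshaking lemma applied to a $3$-regular graph on $n$ vertices, the size satisfies $m=3n/2$.

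Finally I would substitute these values into
$$\gamma_{(k,k',k'')}(G)\geq\frac{(k'+\delta^{*})n-2m}{\delta^{*}+k'-k}$$
and simplify: the numerator becomes $(1+3)n-2\cdot(3n/2)=4n-3n=n$, while the denominator becomes $3+1-0=4$, yielding $\gamma_{r}(G)\geq n/4$. There is no real obstacle here beyond bookkeeping, since the corollary is a one-line plug-in; the only thing to double-check is that the definition of $\delta^{*}$ is consistent with the assumption $\delta(G)\geq k'+k''$ implicitly needed to make $\delta^{*}$ well defined (for cubic graphs this holds trivially).
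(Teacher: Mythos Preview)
Your proposal is correct and follows exactly the approach intended by the paper, which states the corollary as an immediate consequence of Theorem~\ref{Th:Lower.Bound} without spelling out the substitution. Your bookkeeping (taking $(k,k',k'')=(0,1,1)$, noting $\delta^{*}=3$ and $m=3n/2$ for a cubic graph, and simplifying to $n/4$) is precisely the omitted computation.
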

Also, for total restrained and restrained double domination numbers of a cubic graph $G$, we obtain
$‎\gamma‎‎‎_{t}^r‎‎‎(G)‎\geq ‎\frac{n}{3}‎‎$ and $\gamma‎‎‎_{2r}‎‎‎(G)‎\geq ‎\frac{n}{2}$,
‎by Theorem \ref{Th:Lower.Bound}‎, respectively.

\noindent Since $‎\gamma‎_{‎\times‎k}(G)‎‎=‎\gamma‎_{(k-1,k,0)}(G)‎‎$, Theorem \ref{Th:Lower.Bound} is improvements of the following results.
\begin{theorem}\cite{hh}
Let $G$ be a graph of order $n$ and $‎\delta(G)‎\geq k-1‎‎$, then
$$\gamma‎_{‎\times‎k}(G)‎\geq ‎\frac{2kn-2m}{k+1}‎‎$$
and this bound is sharp.
\end{theorem}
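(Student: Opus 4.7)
The plan is to derive this bound as a direct corollary of Theorem~\ref{Th:Lower.Bound}. Using the identification $\gamma_{\times k}(G) = \gamma_{(k-1,k,0)}(G)$ already listed in the introduction, I would apply Theorem~\ref{Th:Lower.Bound} to the triple $(k,k',k'')=(k-1,k,0)$, which after substitution gives
$$\gamma_{\times k}(G) \;\geq\; \frac{(k+\delta^{*})n - 2m}{\delta^{*}+1},$$
where $\delta^{*}$ is the minimum degree among vertices of $G$ whose degree is at least $k'+k''=k$. By definition $\delta^{*}\geq k$.

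The next step is to reduce the right-hand side to the Hattingh--Hedetniemi quantity. I would view $\delta^{*}$ as a real parameter $x\geq k$ and check that $f(x)=\tfrac{(k+x)n-2m}{x+1}$ is non-decreasing. A short computation shows its derivative has the sign of $2m-(k-1)n$, and the hypothesis $\delta(G)\geq k-1$ combined with the handshake lemma gives $2m\geq (k-1)n$. Consequently $f(\delta^{*})\geq f(k) = \tfrac{2kn-2m}{k+1}$, which chains with the bound of Theorem~\ref{Th:Lower.Bound} to produce the desired inequality.

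Finally, for sharpness I would exhibit an extremal example. The natural candidate is $G=K_{k+1}$: every vertex has degree $k$, a brief argument (a set of size $k-1$ fails to $k$-tuple dominate the two omitted vertices, while $V(G)\setminus\{v\}$ works for any $v$) shows $\gamma_{\times k}(K_{k+1})=k$, and the right-hand side evaluates to $\tfrac{2k(k+1) - k(k+1)}{k+1} = k$, giving equality.

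The only non-routine step is the monotonicity of $f$, and it is mild. The point worth flagging is that the direction of this monotonicity is controlled \emph{precisely} by the hypothesis $\delta(G)\geq k-1$; this is what makes the reduction clean and also explains why Theorem~\ref{Th:Lower.Bound} strictly improves the Hattingh--Hedetniemi bound whenever $\delta^{*}>k$ and $2m>(k-1)n$.
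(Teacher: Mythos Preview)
Your derivation is correct and matches the paper's treatment: the paper does not give an independent proof but simply cites the bound from \cite{hh} and asserts that Theorem~\ref{Th:Lower.Bound} improves it, and your monotonicity computation is precisely the verification of that assertion. One small correction: the cited authors are Harary and Haynes, not ``Hattingh--Hedetniemi.''
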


\begin{theorem}\cite{zwx}
Let $G$ be a graph of order $n$ and size $m$ with minimum degree $‎\delta‎\geq k‎‎$. Then $‎\gamma‎_{‎\times‎k,t}‎‎(G)‎\geq‎2(n-\frac{m}{k})$ and this bound is sharp.
\end{theorem}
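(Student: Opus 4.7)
The plan is to deduce this theorem as a direct corollary of Theorem~\ref{Th:Lower.Bound}. Recall from the introduction that $\gamma_{\times k,t}(G)=\gamma_{(k,k,0)}(G)$, so I specialize Theorem~\ref{Th:Lower.Bound} to $(k',k'')=(k,0)$. The hypothesis $\delta(G)\geq k$ forces $\deg(v)\geq k=k'+k''$ for every $v\in V(G)$, hence $\delta^{*}=\delta$. Substituting into the bound supplied by Theorem~\ref{Th:Lower.Bound} yields
$$\gamma_{\times k,t}(G)\geq \frac{(k+\delta)n-2m}{\delta}.$$

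Next I would show that this refined bound already dominates the claimed quantity $2(n-m/k)$. A short algebraic manipulation gives
$$\frac{(k+\delta)n-2m}{\delta}-2\left(n-\frac{m}{k}\right)=\frac{(\delta-k)(2m-kn)}{k\delta},$$
which is nonnegative: $\delta-k\geq 0$ by hypothesis, while the handshake identity $2m=\sum_{v\in V(G)}\deg(v)\geq n\delta\geq nk$ gives $2m-kn\geq 0$. Chaining the two inequalities delivers the lower bound $\gamma_{\times k,t}(G)\geq 2(n-m/k)$.

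For sharpness I would propose the family of $k$-regular graphs $G$ on $n$ vertices. Here $2m=nk$, so the bound collapses to $n$. To match it from above, I would argue that in a $k$-regular graph every total $k$-dominating set must be all of $V(G)$: if $v\notin S$, then any neighbor $u\in N(v)$ lying in $S$ would need $k$ neighbors in $S$, but $u$ has only $k$ neighbors in total, one of them being $v\notin S$, a contradiction. Hence $\gamma_{\times k,t}(G)=n$, attaining equality in the bound.

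The only real obstacle is the algebraic step showing that the bound from Theorem~\ref{Th:Lower.Bound} is at least $2(n-m/k)$; its sign depends on the elementary inequality $2m\geq n\delta$ combined with $\delta\geq k$, and once these are in place the whole theorem is essentially a one-line corollary plus a one-line sharpness example.
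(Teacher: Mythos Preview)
Your proposal is correct and follows exactly the route the paper intends: the statement is quoted from \cite{zwx} without proof, and the paper simply asserts that Theorem~\ref{Th:Lower.Bound} is an improvement of it, i.e., that the cited bound is a consequence of the new one. You have supplied precisely the missing details---the specialization $(k,k',k'')=(k,k,0)$ giving $\delta^{*}=\delta$, the algebraic comparison $\frac{(k+\delta)n-2m}{\delta}-2(n-m/k)=\frac{(\delta-k)(2m-kn)}{k\delta}\geq 0$, and a sharpness family of $k$-regular graphs---none of which the paper writes out explicitly.
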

Theorem \ref{Th:Lower.Bound} is also an improvement of the following Theorem.

\begin{theorem}\cite{fj2}
If $G$ is a graph with $n$ vertices and $m$ edges, then $‎\gamma‎_{k}(G)‎\geq n-\frac{m}{k}‎‎‎$ for each $k‎\geq1‎$.
\end{theorem}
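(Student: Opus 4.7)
The plan is to derive the Fink--Jacobson bound as an immediate specialization of Theorem \ref{Th:Lower.Bound}, since the list in the introduction identifies $\gamma_{k}(G)=\gamma_{(0,k,0)}(G)$. First I would set $(k,k',k'')=(0,k,0)$; the hypothesis $\delta(G)\geq 0$ is automatic, so Theorem \ref{Th:Lower.Bound} applies and yields
$$\gamma_{k}(G)\geq \frac{(k+\delta^{*})n-2m}{\delta^{*}+k},$$
where by definition $\delta^{*}=\min\{\deg(v):v\in V(G),\ \deg(v)\geq k'+k''\}=\min\{\deg(v):\deg(v)\geq k\}$.

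Next I would rewrite the right-hand side as $n-\frac{2m}{\delta^{*}+k}$ and use the inequality $\delta^{*}\geq k$, which is built into the definition of $\delta^{*}$, to bound it below by $n-\frac{2m}{2k}=n-\frac{m}{k}$. This both proves the stated inequality and exhibits it as the $\delta^{*}=k$ case of the finer bound, making transparent the advertised improvement whenever $\delta^{*}>k$.

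The degenerate case in which no vertex of $G$ has degree at least $k$ needs a separate line: then the only $k$-dominating set is $V(G)$ itself, so $\gamma_{k}(G)=n\geq n-m/k$ trivially. There is no genuine obstacle here; the content of the proof lives entirely inside Theorem \ref{Th:Lower.Bound}, and this corollary reduces to a two-line algebraic check. For completeness I would record the classical self-contained argument as a sanity check: letting $S$ be a minimum $k$-dominating set, every vertex of $V\setminus S$ contributes at least $k$ edges to $[S,V\setminus S]$, so $k(n-|S|)\leq|[S,V\setminus S]|\leq m$, whence $|S|\geq n-m/k$.
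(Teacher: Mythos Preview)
Your proposal is correct and matches the paper's approach: the paper does not give a separate proof of this cited result but simply notes that it is (a weakening of) the special case $(k,k',k'')=(0,k,0)$ of Theorem~\ref{Th:Lower.Bound}, which is exactly the derivation you carry out. Your handling of the degenerate case where $\delta^{*}$ is undefined and your inclusion of the classical direct counting argument are welcome additions that the paper omits.
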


\noindent We note that every graph $G$ with $‎\delta(G)‎‎\geq k$ has
a $(k,k',0)$-dominating set such as $S=V(G)$
and therefore $‎\gamma‎_{(k,k',0)}(G)$ is well-defined when $‎\delta(G)‎‎\geq k$.

\begin{theorem}\label{Th:LB2}
If $G$ is a graph of order $n$ and $‎\delta(G)‎‎\geq k‎$, then $‎\gamma‎_{(k,k',0)}(G)‎\geq k'n/(‎\Delta+k'-k‎)‎‎‎$.
\end{theorem}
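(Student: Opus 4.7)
The plan is to mimic the double-counting strategy used in Theorem~\ref{Th:Lower.Bound}, but to apply it to the bipartite cut $[S, V\setminus S]$ alone rather than to the total edge set, since the hypothesis $k''=0$ gives no lower control on $|E(G[V\setminus S])|$.

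First I would take a minimum $(k,k',0)$-dominating set $S$, so $|S|=\gamma_{(k,k',0)}(G)$. I would then bound $|[S,V\setminus S]|$ from below and from above. For the lower bound, each vertex $v\in V\setminus S$ has at least $k'$ neighbors in $S$ by the definition of a $(k,k',0)$-dominating set, so counting edges of the cut from the $V\setminus S$ side yields
\[
|[S,V\setminus S]|\geq k'(n-|S|).
\]

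For the upper bound, each vertex $u\in S$ has $\deg(u)\leq \Delta$ incident edges, and at least $k$ of its neighbors lie in $S$ (again by the definition), so at most $\Delta-k$ of them can lie in $V\setminus S$. Counting edges of the cut from the $S$ side then gives
\[
|[S,V\setminus S]|\leq (\Delta-k)|S|.
\]

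Combining the two bounds produces $k'(n-|S|)\leq (\Delta-k)|S|$, which rearranges directly to $|S|\geq k'n/(\Delta+k'-k)$, completing the argument. There is essentially no obstacle here: the proof is a three-line double count, and the only thing to keep an eye on is that $\Delta+k'-k>0$, which follows from $\Delta\geq\delta\geq k$ together with $k'\geq 1$ (the case $k'=0$ makes the claimed inequality trivial, since $S=\emptyset$ already suffices when $k=0$, and otherwise the hypothesis $\delta\geq k$ still forces the bound $0$ to hold vacuously).
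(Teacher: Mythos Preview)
Your proposal is correct and matches the paper's proof essentially line for line: the paper also takes a minimum $(k,k',0)$-dominating set $S$, bounds $|[S,V\setminus S]|$ above by $(\Delta-k)|S|$ and below by $k'(n-|S|)$, and rearranges. Your added remark about the positivity of $\Delta+k'-k$ is a small bonus the paper omits.
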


\begin{proof}
Let $S$ be a minimum $(k,k',0)$-dominating set in $G$. Then each vertex of $S$ is adjacent to at least $k$ vertices in $S$ and therefore to at most $‎\Delta-k‎$ vertices in $V‎\setminus S‎$, and so $|[S,V‎\setminus S‎]|‎\leq (‎\Delta-k)‎|S|$. On the other hand, every vertex of $V‎\setminus S‎$ has at least $k'$ neighbors in $S$, and so $k'(n-|S|)‎\leq ‎|[S,V‎\setminus S‎]|‎$. Consequently, $‎\gamma‎_{(k,k',0)}(G)‎=|S|‎\geq k'n/(‎\Delta+k'-k‎)‎$.
\end{proof}

\noindent The following corollaries are immediate results of Theorem \ref{Th:LB2}.

\begin{corollary}(\cite{hk,zwx})
If $G$ is a graph of minimum degree at least $k$, then $‎\gamma‎_{‎\times k,t}(G)‎\geq kn/‎\Delta‎‎‎‎$ and this bound is sharp.
\end{corollary}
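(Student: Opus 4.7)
The plan is to deduce this corollary directly from Theorem \ref{Th:LB2} by specializing the parameters $(k,k',0)$ to $(k,k,0)$, using the identification $\gamma_{\times k,t}(G) = \gamma_{(k,k,0)}(G)$ listed in the introduction. Concretely, I would first note that the hypothesis $\delta(G) \geq k$ is exactly the hypothesis needed for $\gamma_{(k,k,0)}(G)$ to be well-defined (so that $V(G)$ itself is a $(k,k,0)$-dominating set). Then applying Theorem \ref{Th:LB2} with $k' = k$ gives
\[
\gamma_{\times k,t}(G) = \gamma_{(k,k,0)}(G) \geq \frac{kn}{\Delta + k - k} = \frac{kn}{\Delta},
\]
which is the claimed bound.

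For the sharpness statement, I would exhibit a family of $\Delta$-regular graphs for which equality is attained. A natural candidate is the complete bipartite graph $K_{k,k}$ (or more generally $K_{\Delta,\Delta}$ with a suitable partition), where the minimum $k$-tuple total dominating set can be described explicitly and its cardinality checked against $kn/\Delta$. Since the paper cites \cite{hk,zwx} for this sharpness, I would simply refer to the constructions there rather than reproducing them.

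The only thing to be careful about is the algebraic step: the denominator $\Delta + k' - k$ in Theorem \ref{Th:LB2} collapses to $\Delta$ precisely when $k' = k$, and the numerator $k'n$ becomes $kn$, so no case analysis or inequality manipulation is needed. There is no real obstacle here; the corollary is an immediate substitution, and the only content is noticing the correct specialization of $(k,k',0)$ and invoking the previously listed equality between the two notations.
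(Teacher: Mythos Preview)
Your proposal is correct and is precisely the approach the paper intends: the corollary is listed as an immediate consequence of Theorem~\ref{Th:LB2}, and your substitution $k'=k$ together with the identification $\gamma_{\times k,t}(G)=\gamma_{(k,k,0)}(G)$ is exactly the specialization required. The paper likewise does not reprove sharpness but relies on the cited references, so your handling of that point matches as well.
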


\begin{corollary}\cite{hh}
If $G$ is a graph of order $n$ with $‎\delta(G)‎\geq k-1‎‎$, then $‎\gamma‎_{‎\times k}(G)‎\geq kn/(‎\Delta+1‎)$ and this bound is sharp.
\end{corollary}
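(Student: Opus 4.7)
\noindent The plan is to derive this corollary as an immediate specialization of Theorem~\ref{Th:LB2}. Recall from the introductory list of identifications that $\gamma_{\times k}(G) = \gamma_{(k-1,k,0)}(G)$. I would apply Theorem~\ref{Th:LB2} with $k-1$ playing the role of the first index and $k$ playing the role of $k'$. Under this substitution, the hypothesis $\delta(G) \geq k$ of Theorem~\ref{Th:LB2} becomes precisely the hypothesis $\delta(G) \geq k-1$ assumed in the corollary, so the theorem does apply. Plugging these values into the lower bound $\gamma_{(k,k',0)}(G) \geq k'n/(\Delta + k' - k)$, the denominator becomes $\Delta + k - (k-1) = \Delta + 1$ and the numerator becomes $kn$, yielding
$$\gamma_{\times k}(G) \geq \frac{kn}{\Delta + 1}.$$

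\noindent For the sharpness claim I would exhibit the complete graph $K_n$ with $n \geq k$ as a witness. Any subset $S \subseteq V(K_n)$ of size $k$ is a $(k-1,k,0)$-dominating set: each vertex of $S$ has exactly $k-1$ neighbors inside $S$, each vertex of $V\setminus S$ has exactly $k$ neighbors in $S$, and the third condition is vacuous since $k''=0$. Thus $\gamma_{\times k}(K_n) \leq k$, and on the other hand the $k$-tuple domination requirement forces $|S| \geq k$, so equality holds. Evaluating the bound gives $kn/((n-1)+1) = k$, matching exactly.

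\noindent There is no substantive obstacle in this argument; it is a direct symbolic specialization of an already-established theorem together with a standard extremal example. The only point requiring care is bookkeeping: one must keep straight which parameter in the statement of Theorem~\ref{Th:LB2} corresponds to which parameter in the $k$-tuple domination convention, and in particular verify that the minimum-degree hypothesis transfers correctly under the substitution $k \mapsto k-1$.
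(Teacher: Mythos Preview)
Your proposal is correct and matches the paper's approach exactly: the paper states this corollary as an ``immediate result'' of Theorem~\ref{Th:LB2}, obtained precisely by the substitution $(k,k',k'') = (k-1,k,0)$ that you carry out. Your inclusion of the explicit sharpness witness $K_n$ goes slightly beyond the paper, which simply attributes sharpness to the cited source.
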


\begin{corollary}\cite{fj1}
If $G$ is a graph of order $n$, then $‎\gamma‎_{k}(G)‎‎‎\geq kn/(‎\Delta+k‎)‎$ for every integer $k‎$‎.
\end{corollary}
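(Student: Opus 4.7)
The plan is to derive this corollary directly from Theorem \ref{Th:LB2} via the parameter identification listed in the introduction. Recall that the authors have defined $\gamma_{k}(G) = \gamma_{(0,k,0)}(G)$, so specializing Theorem \ref{Th:LB2} to the triple $(k,k',k'') = (0,k,0)$ should yield the bound immediately. The hypothesis $\delta(G) \geq 0$ that Theorem \ref{Th:LB2} requires is trivially satisfied by every graph, so there is no side condition to check beyond the one already imposed on $G$.

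Concretely, I would first replace $k$ by $0$ and $k'$ by $k$ in the statement of Theorem \ref{Th:LB2}, which transforms the lower bound
\[
\gamma_{(k,k',0)}(G) \geq \frac{k'\,n}{\Delta + k' - k}
\]
into
\[
\gamma_{(0,k,0)}(G) \geq \frac{k\,n}{\Delta + k - 0} = \frac{kn}{\Delta + k}.
\]
Then I would invoke the identification $\gamma_{(0,k,0)}(G) = \gamma_{k}(G)$ from the introductory list of special cases to rewrite the left-hand side and conclude.

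There is no genuine obstacle here: the entire content of the corollary is a substitution into an already proved inequality, and the only thing worth flagging is that the ``$\delta(G) \geq k$'' hypothesis of Theorem \ref{Th:LB2} becomes vacuous (it reads $\delta(G) \geq 0$) so no assumption on $\delta$ needs to be transferred to the corollary. Thus the proof will be a single displayed line citing Theorem \ref{Th:LB2}.
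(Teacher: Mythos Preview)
Your proposal is correct and matches the paper's approach exactly: the paper simply lists this corollary as an immediate consequence of Theorem~\ref{Th:LB2}, and your substitution $(k,k',k'')=(0,k,0)$ together with the identification $\gamma_k(G)=\gamma_{(0,k,0)}(G)$ is precisely the intended derivation.
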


\section {Upper bounds on $(k,k',1)$-domination numbers}

\noindent In this section we present an upper bound on $(k,k',1)$-domination numbers and list some of the existing upper bounds which can be derived from this upper bound.

\begin{theorem}
Let $G$ be a graph of order $n$ and let $k$ and $k'‎$ be positive integers.
\begin{enumerate}
\item  If $k'‎\geq k+1‎$ and $‎\delta‎‎\geq k'+1‎$, then
$‎\gamma‎‎_{(k,k',1)}(G)‎\leq n-‎\delta+ k'-1‎‎‎‎$.

\item If $k\geq k'$ and $‎\delta‎‎\geq k+2‎$, then
$‎\gamma‎‎_{(k,k',1)}(G)‎\leq n-‎\delta+ k‎‎‎‎$.
\end{enumerate}
The bounds given in Parts 1 and 2 are sharp.
\end{theorem}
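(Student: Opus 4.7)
The plan is to construct, for each part, an explicit $(k,k',1)$-dominating set $S$ of the required cardinality by removing from $V(G)$ a small cluster of vertices centered at a minimum-degree vertex. Concretely, let $v\in V(G)$ with $\deg(v)=\delta$, pick a subset $T'\subseteq N(v)$, and set $T=\{v\}\cup T'$ and $S=V(G)\setminus T$. In Part~1 I would take $|T'|=\delta-k'$, so $|T|=\delta-k'+1$ and $|S|=n-\delta+k'-1$; in Part~2 I would take $|T'|=\delta-k-1$, so $|T|=\delta-k$ and $|S|=n-\delta+k$. Note that $|T'|\geq 1$ holds in Part~1 because $\delta\geq k'+1$, and in Part~2 because $\delta\geq k+2$; this positivity is exactly what the hypotheses are buying us.

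Next I would verify the three defining conditions of a $(k,k',1)$-dominating set. The condition that each vertex of $S$ has at least $k$ neighbors in $S$ follows from the bound $|N(w)\cap T|\leq |T|$ together with $\deg(w)\geq\delta$: in Part~1 this uses $k'\geq k+1$ to conclude $\deg(w)-|T|\geq k'-1\geq k$, and in Part~2 it is immediate since $\deg(w)-|T|\geq k$. The condition that each vertex of $T$ has at least $k'$ neighbors in $S$ follows from $|N(u)\cap T|\leq |T|-1$ (since $u\notin N(u)$): one gets $\deg(u)-(|T|-1)\geq k'$ in Part~1, and $\deg(u)-(|T|-1)\geq k+1\geq k'$ in Part~2 where $k\geq k'$ is used.

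The only subtle requirement, and really the reason the proof needs a thoughtful construction rather than a blunt counting argument, is the final condition that every vertex of $V\setminus S=T$ has at least one neighbor in $T$. This is where the choice to include $v$ in $T$ pays off: every $u\in T'$ is adjacent to $v\in T$ by construction, and $v$ itself is adjacent to every element of $T'\subseteq N(v)$, which is nonempty precisely by the degree hypothesis. Thus the ``$1$'' in $(k,k',1)$ is handled uniformly by anchoring $T$ at a common neighbor $v$.

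Finally, for sharpness I would exhibit the complete graph $K_n$, whose $(k,k',1)$-domination number is easily computed to be $\max\{k+1,k'\}$. Taking $n$ large enough gives $\gamma_{(k,k',1)}(K_n)=k'=n-\delta+k'-1$ in the regime of Part~1 and $\gamma_{(k,k',1)}(K_n)=k+1=n-\delta+k$ in the regime of Part~2, matching the upper bounds. The only place I expect to need care in writing is making sure the two verifications of conditions (i) and (ii) are stated so that the role of the two different hypotheses ($k'\geq k+1$ versus $k\geq k'$) is transparent.
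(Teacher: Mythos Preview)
Your proposal is correct and follows essentially the same approach as the paper: both pick a minimum-degree vertex $u$, set $S=V(G)\setminus T$ where $T$ consists of $u$ together with a suitable number of its neighbors (the paper phrases this as $S=V(G)\setminus(N[u]\setminus\{v_1,\ldots,v_{k'}\})$, which is exactly your $T=\{u\}\cup T'$ with $|T'|=\delta-k'$), and verify the three conditions via the same counting bounds $|N(w)\cap T|\le|T|$ and $|N(w)\cap T|\le|T|-1$. The sharpness example $K_n$ is also the same.
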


\begin{proof}
Let $u$ be a vertex in $G$ with $\deg(u)=‎\delta‎$.

\noindent {\bf Proof of 1:} Suppose that $v‎_{1},\ldots,v‎_{k'}‎‎‎\in N(u)‎$. Since $‎\delta‎\geq k'+1$, it follows that $|N[u]‎\setminus \{v‎_{1},\ldots,v‎_{k'}\}‎|‎\geq2‎$ and therefore $N[u]‎\setminus \{v‎_{1},\ldots,v‎_{k'}\}$ is a nonempty set. Also, it is easy to see that the subgraph induced by $N[u]‎\setminus \{v‎_{1},\ldots,v‎_{k'}\}$ has no isolated vertices. Now let $S=V(G)‎\setminus (N[u]‎\setminus \{v‎_{1},\ldots,v‎_{k'}\})‎$. Let $v‎\in‎ N[u]\setminus \{v‎_{1},\ldots,v‎_{k'}\}$. Then $v$ can be joint to at most $‎\delta-k'‎$ vertices in $N[u]‎\setminus \{v‎_{1},\ldots,v‎_{k'}\}$. Thus $v$ has at least $k'$ neighbors in $S$. On the other hand, for every vertex $v$ in $S$ we have
$$|N(v)‎\cap S‎|=deg(v)-|N(v)‎\cap (N[u]‎\setminus \{v‎_{1},\ldots,v‎_{k'}\})‎|‎\geq deg(v)-‎\delta +k'-1‎‎‎\geq k.‎$$
Therefore $S$ is a $(k,k',1)$-dominating set in $G$. Hence,
$$
\gamma‎_{(k,k',1)}(G)‎‎‎‎\leq |S|‎=|V(G)‎\setminus (N[u]‎\setminus \{v‎_{1},\ldots,v‎_{k'}\})‎|=n-‎\delta+k'-1.
$$

\noindent {\bf Proof of 2:} Suppose that $v‎_{1},\ldots,v‎_{k+1}‎‎‎\in N(u)‎$. By assumption,
$|N[u]‎\setminus \{v‎_{1},\ldots,v‎_{k+1}\}‎|‎\geq 2‎$ and the subgraph induced by
$N[u]‎\setminus \{v‎_{1},\ldots,v‎_{k+1}\}$ has no isolated vertices. An argument similar to that described
in Part 1 shows that $S=V(G)‎\setminus (N[u]‎\setminus \{v‎_{1},\ldots,v‎_{k+1}\})‎$
is a $(k,k',1)$-dominating set in $G$. Therefore
$$
\gamma‎_{(k,k',1)}(G)‎‎‎‎\leq |S|‎=|V(G)‎\setminus (N[u]‎\setminus \{v‎_{1},\ldots,v‎_{k+1}\})‎|=n-‎\delta+k.
$$

\noindent It is easy to see that the upper bounds are sharp for the complete graph $K‎_{n}‎$, when $n‎\geq \max\{k,k'\}+3‎$.
\end{proof}
Considering Parts 1 and 2 of Theorem 3.1 we can see that
$$‎\gamma‎‎_{(k,k',1)}(G)‎\leq n-‎\delta+ max\{k,k'-1\},‎‎‎‎$$
when $‎\delta(G)‎‎\geq max\{k,k'\}+2‎$.\\

As an immediate consequence we conclude the following corollary.

\begin{corollary}
If $G$ is a graph of order $n$ and minimum degree $‎\delta‎\geq3‎‎$, then $\gamma‎_{2r}‎‎(G)‎\leq n-‎\delta+1‎‎$ and the bound is sharp.
\end{corollary}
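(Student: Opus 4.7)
The plan is to recognize this corollary as an immediate specialization of Part 1 of Theorem 3.1, combined with the identification $\gamma_{2r}(G) = \gamma_{(1,2,1)}(G)$ recorded in the list of special cases in the introduction. There is essentially no new work required; the task is just to verify the hypotheses and substitute the parameters.

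First, I would set $k = 1$ and $k' = 2$ and check the two hypotheses of Part 1: the requirement $k' \geq k+1$ holds with equality since $2 \geq 2$, and the requirement $\delta \geq k'+1$ becomes $\delta \geq 3$, which is exactly the assumption of the corollary. Then Part 1 yields
\[
\gamma_{(1,2,1)}(G) \leq n - \delta + k' - 1 = n - \delta + 1,
\]
and using $\gamma_{2r}(G) = \gamma_{(1,2,1)}(G)$ finishes the bound.

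For sharpness I would invoke the complete graph $K_n$ with $n \geq 5$, in line with the sharpness assertion for Theorem 3.1 at the end of its proof (taking $n \geq \max\{k,k'\}+3 = 5$). On $K_n$ we have $\delta = n-1$, so the bound reads $\gamma_{2r}(K_n) \leq 2$. Any single vertex fails the $(1,2,1)$-condition because it has no neighbor inside $S$, whereas any two-element set $S$ works: each vertex of $S$ has the other as a neighbor, each vertex outside $S$ sees both members of $S$ and has $n-3 \geq 2$ further neighbors in $V \setminus S$. Hence $\gamma_{2r}(K_n) = 2 = n - \delta + 1$, proving sharpness.

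The only potential obstacle is a purely notational one: making sure that the matching $(k,k',k'') = (1,2,1)$ is the correct triple for the restrained double domination number, which is confirmed by the bullet list in the introduction. Since the hypotheses of Part 1 are met with room to spare and $K_n$ gives equality, the proof is a two-line deduction with a short sharpness verification.
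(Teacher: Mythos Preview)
Your proposal is correct and mirrors the paper's own approach: the corollary is stated there as an immediate consequence of Theorem~3.1, and you have simply spelled out the substitution $(k,k',k'')=(1,2,1)$ in Part~1 together with the $K_n$ sharpness example already recorded at the end of that theorem's proof. The only cosmetic point is that for the $(1,2,1)$-condition you need just $n-3\geq 1$ (not $\geq 2$) neighbors in $V\setminus S$, but with $n\geq 5$ your stronger inequality holds anyway.
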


\noindent The authors in \cite{kn} showed that $\gamma‎_{2r}(G)‎\leq n-2‎$ for every graph of order $n$ and minimum degree $‎\delta(G‎‎)‎‎\geq 3‎$. In fact, Corollary 3.2 gives an improvement for this bound. In addition, if $‎\delta‎\geq4‎‎$,
then the upper bound $n-2$ for $\gamma‎_{2r}(G)$ is not sharp.


\end{document}